\date{}
\newlength{\defbaselineskip}
\thanks{The author was partially supported by the project \emph{An\'alisis nolineal y ecuaciones diferenciales}, FQM 116, Departamento de An\'alisis Matem\'atico, Universidad de Granada}
\long\def\salta#1{\relax}
\theoremstyle{plain}
\newtheorem{theorem}{Theorem}[section]
\newtheorem{lemma}[theorem]{Lemma}
\theoremstyle{definition}
\newtheorem{remark}[theorem]{Remark}
\newtheorem{definition}[theorem]{Definition}
\theoremstyle{remark}
\newcommand{\sob}[2]{W^{#1}_{#2}(\Omega)}
\def\diff{\mathchoice{\!\setminus\!}{\!\setminus\!}{\setminus}{\setminus}}
\newcommand{\rn}{{\erre^N}}
\newcommand{\intmu}[3]{\int_{#1}{#2\,d#3}}
\newcommand{\car}[1]{\raise2pt\hbox{$\chi$}_{#1}}
\newcommand{\D}{{\nabla}}
\newcommand{\meas}[1]{{\rm meas}\,#1}
\newcommand{\pdp}{\psi_\de^+}
\newcommand{\pdm}{\psi_\de^-}
\newcommand{\fn}{f_n}
\newcommand{\fnp}{f_n^+}
\newcommand{\fnm}{f_n^-}
\newcommand{\un}{u_n}
\newcommand{\de}{\delta}
\newcommand{\la}{\lambda}
\long\def\salta#1{\relax}
\newcommand{\norma}[2]{\|#1\|_{\lower 4pt \hbox{$\scriptstyle #2$}}}
\newcommand{\ba}{\begin{array}}
\newcommand{\be}{\begin{equation}}
\newcommand{\bea}{\begin{eqnarray*}}
\newcommand{\bean}{\begin{eqnarray}}
\newcommand{\dys}{\displaystyle}
\newcommand{\ea}{\end{array}}
\newcommand{\ee}{\end{equation}}
\newcommand{\eea}{\end{eqnarray*}}
\newcommand{\eean}{\end{eqnarray}}
\newcommand{\rife}[1]{(\ref{#1})}
\def\@ceqnnum{{\reset@font\rm (\tempequation)}}
\def\@ceqncr{{\ifnum0=`}\fi\@ifstar{\global\@eqpen\@M
    \@cyeqncr}{\global\@eqpen\interdisplaylinepenalty \@cyeqncr}}
\def\@cyeqncr{\@ifnextchar [{\@cxeqncr}{\@cxeqncr[\z@]}}
\def\@cxeqncr[#1]{\ifnum0=`{\fi}\@@ceqncr
   \noalign{\penalty\@eqpen\vskip\jot\vskip #1\relax}}
\def\@@ceqncr{\let\@tempa\relax
    \ifcase\@eqcnt \def\@tempa{& & &}\or \def\@tempa{& &}%
      \else \def\@tempa{&}\fi
     \@tempa \if@eqnsw\@ceqnnum\fi
     \global\@eqnswtrue\global\@eqcnt\z@\cr}
\def\clabel#1#2{\global\let\tempequation #2
   \@bsphack\if@filesw {\let\thepage\relax
   \def\protect{\noexpand\noexpand\noexpand}%
   \edef\@tempa{\write\@auxout{\string
      \newlabel{#1}{{#2}{\thepage}}}}%
   \expandafter}\@tempa
   \if@nobreak \ifvmode\nobreak\fi\fi\fi\@esphack}
\def\ceqnarray{
\global\@eqnswtrue\m@th
\global\@eqcnt\z@\tabskip\@centering\let\\\@ceqncr
$$\halign to\displaywidth\bgroup\@eqnsel\hskip\@centering
  $\displaystyle\tabskip\z@{{}##}$&\global\@eqcnt\@ne
  \hskip 2\arraycolsep \hfil${{}##}$\hfil
  &\global\@eqcnt\tw@ \hskip 2\arraycolsep $\displaystyle\tabskip\z@{{}##}$\hfil
   \tabskip\@centering&\llap{##}\tabskip\z@\cr}
\def\endceqnarray{\@@ceqncr\egroup$$\global\@ignoretrue}
\def\@ceqnnum{{\reset@font\rm (\tempequation)}}
\def\@ceqncr{{\ifnum0=`}\fi\@ifstar{\global\@eqpen\@M
    \@cyeqncr}{\global\@eqpen\interdisplaylinepenalty \@cyeqncr}}
\def\@cyeqncr{\@ifnextchar [{\@cxeqncr}{\@cxeqncr[\z@]}}
\def\@cxeqncr[#1]{\ifnum0=`{\fi}\@@ceqncr
   \noalign{\penalty\@eqpen\vskip\jot\vskip #1\relax}}
\def\@@ceqncr{\let\@tempa\relax
    \ifcase\@eqcnt \def\@tempa{& & &}\or \def\@tempa{& &}%
      \else \def\@tempa{&}\fi
     \@tempa \if@eqnsw\@ceqnnum\fi
     \global\@eqnswtrue\global\@eqcnt\z@\cr}
\def\clabel#1#2{\global\let\tempequation #2
   \@bsphack\if@filesw {\let\thepage\relax
   \def\protect{\noexpand\noexpand\noexpand}%
   \edef\@tempa{\write\@auxout{\string
      \newlabel{#1}{{#2}{\thepage}}}}%
   \expandafter}\@tempa
   \if@nobreak \ifvmode\nobreak\fi\fi\fi\@esphack}
\def\ceqnarray{
\global\@eqnswtrue\m@th
\global\@eqcnt\z@\tabskip\@centering\let\\\@ceqncr
$$\halign to\displaywidth\bgroup\@eqnsel\hskip\@centering
  $\displaystyle\tabskip\z@{{}##}$&\global\@eqcnt\@ne
  \hskip 2\arraycolsep \hfil${{}##}$\hfil
  &\global\@eqcnt\tw@ \hskip 2\arraycolsep
$\displaystyle\tabskip\z@{{}##}$\hfil
   \tabskip\@centering&\llap{##}\tabskip\z@\cr}
\def\endceqnarray{\@@ceqncr\egroup$$\global\@ignoretrue}
\long\def\salta#1{\relax}
\def\rn{\mathbb{R}^{N}}
\def\de{\delta}
\def\a#1{a(t,x,\nabla #1)}
\def\intmu#1#2#3{\int_{{#1}}{#2\,d#3}}
\def\liq{L^{\infty}(Q)}
\def\D{\nabla}
\def\be{\begin{equation}}
\def\ee{\end{equation}}
\def\rife#1{(\ref{#1})}
\def\rifer#1{({\rm \ref{#1}})}
\def\a#1{a(t,x,\nabla #1)}
\def\tkun{T_{k}(u_n)}
\def\tkunf{T_{k}(u_n-\varphi)}
\def\t1p0{\mathcal{T}^{1,p}_{0}(\Omega)}
\def\capp{\text{\rm{cap}}_{p}}
\def\m2{M^{\frac{N(p-1)}{N-1}}(\Omega)}
\def\div{{\rm div}}
\def\sob{W^{1,p}_{0}(\Omega)}
\def\lsp{\lambda_{s}^{+}}
\def\lsm{\lambda_{s}^{-}}
\newcommand{\elle}[1]{L^{#1}(\Omega)}
\newcommand{\pelle}[1]{L^{#1}(Q)}
\def\into{\int_{\Omega}}
\def\liq{L^{\infty}(Q)}
\def\w-1p'{W^{-1,p'}(\Omega)}
\def\w-1pd{W^{-1,p'}(D)}
\def\pw-1p'{L^{p'}(0,T;W^{-1,p'}(\Omega))}
\def\l{\textsl{L}}
\def\dys{\displaystyle}
\def\luq{L^{1}(Q)}
\def\lp'n{(L^{p'}(\Omega))^{N}}
\def\fnp{f^{\oplus}_{n}}
\def\fnm{f^{\ominus}_{n}}
\def\supp{\text{\rm{supp}}}
\def\pdp{\psi_{\delta}^{+}}
\def\pdm{\psi_{\delta}^{-}}
\def\pep{\psi_{\eta}^{+}}
\def\pem{\psi_{\eta}^{-}}
\def\kdp{K^{+}_{\delta}}
\def\kdm{K^{-}_{\delta}}
\def\pep{\psi_{\eta}^{+}}
\def\pem{\psi_{\eta}^{-}}
\def\psob{L^{p}(0,T;W^{1,p}_{0}(\Omega))}
\def\l2h-1{L^2 (0,T ; H^{-1} ( \Omega ))}
\def\hmun{h_m (u_n)}
\def\para#1#2{L^{#1}(0,T;W^{1,#2}_0(\Omega))}
\def\meas#1{{\rm meas}\,#1}
\def\tom{{Q}}
\def\paraduale{L^{p'}(0,T;\duale)}
\def\duale{W^{-1,p'}(\Omega)}
\def\car#1{\raise2pt\hbox{$\chi$}_{#1}}
\newcommand{\LL}{\>\hbox{\vrule width.2pt \vbox to 7pt{\vfill\hrule width 7pt height.2pt}}\>}
\def\supp{\text{\rm{supp}}}
\def\div{\begin{rm}div\end{rm}}
\def\liq{L^{\infty}(Q)}
\def\lp'n{(L^{p'}(\Omega))^{N}}
\def\sob{W^{1,p}_{0}(\Omega)}
\def\t1p0{T^{1,p}_{0}(\Omega)}
\def\w-1p'{W^{-1,p'}(\Omega)}
\def\pw-1p'{L^{p'}(0,T;W^{-1,p'}(\Omega))}
\def\psob{L^{p}(0,T;W^{1,p}_{0}(\Omega))}
\def\lil2{L^{\infty}(0,T;L^2 (\Omega))}
\def\l2h10{L^2 (0,T ; H^1_0 ( \Omega ))}
\def\m2{M^{\frac{N(p-1)}{N-1}}(\Omega)}
\def\pdp{\psi_{\delta}^{+}}
\def\pdm{\psi_{\delta}^{-}}
\def\pep{\psi_{\eta}^{+}}
\def\pem{\psi_{\eta}^{-}}
\def\tkun{T_k (u_n)}
\def\capp{\text{\rm{cap}}_{p}}
\def\l{\textsl{L}}
\def\de{\delta}
\def\D{\nabla}
\def\intmu#1#2#3{\int_{{#1}}{#2\,d#3}}
\def\into{\int_{\Omega}}
\def\intq{\int_{Q}}
\def\intq1{\displaystyle \int_{\Omega \times (0, 1)}}
\def\rife#1{(\ref{#1})}
\def\rifer#1{({\rm \ref{#1}})}
\def\dys{\displaystyle}
\author[F. Petitta]{Francesco Petitta}
\address[F. Petitta]{Dipartimento di Scienze di Base e Applicate
per l' Ingegneria, ``Sapienza", Universit\`a di Roma, Via Scarpa 16, 00161 Roma, Italy.}\email{francesco.petitta@sbai.uniroma1.it}
\begin{document}

%\setlinespacing{1}

\title[Nonexistence for parabolic problems with singular measures]{A nonexistence result  for nonlinear parabolic equations with singular measures as data}

\begin{abstract}
In this paper we prove a nonexistence result for nonlinear parabolic problems with zero lower order term whose model is
$$
\begin{cases}
    u_{t}- \Delta_p u+|u|^{q-1}u=\lambda & \text{in}\ (0,T)\times\Omega \\
    u(0,x)=0 & \text{in}\ \Omega,\\
u(t,x)=0 & \text{on}\  (0,T)\times\partial\Omega,
\end{cases}
$$
where $\Delta_p =\div(|\nabla u|^{p-2}\nabla u)$ is the usual $p$-laplace operator,
 $\lambda$ is measure concentrated on a set of zero parabolic $r$-capacity ($1<p<r$),  and $q$ is large enough.
\end{abstract}

\maketitle
\section{Introduction}

The question whether a solution should exists or not for semilinear problems
has been largely studied in the elliptic framework; in a pioneering paper by H.
Brezis (\cite{B}) the author proved the following
\begin{theorem}\label{dirac}
Let $\Omega$ be a bounded open subset of $\rn$, $N > 2$, with $0 \in \Omega$, let $f$ be a
function in $\elle1$, and let $\fn$ be a sequence of $\elle\infty$ functions such that
\begin{equation}
\lim_{n \to +\infty} \, \intmu{\Omega \diff B_\rho(0)}{|\fn-f|}{x} = 0\,,
\qquad \forall \rho > 0\,.
\label{condbre}
\end{equation}
Let $\un$ be the sequence of
solutions of the following nonlinear elliptic problems
\begin{equation}\label{pb}
\begin{cases}
-\Delta \un + |\un|^{q-1}\,\un = \fn&\ \text{in }\ \Omega\\
\un = 0, &\text{on }\ \partial\Omega
\end{cases}
\end{equation}
with $q \geq \frac{N}{N-2}$. Then $\un$ converges to the unique solution $u$ of the equation
$-\Delta u + |u|^{q-1}\,u = f$.
\end{theorem}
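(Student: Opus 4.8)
The plan is to combine a priori estimates and compactness with a capacitary argument forcing the concentrations that appear in the limit to cancel exactly. Fix $q_0$ with $1<q_0<\frac{N}{N-1}$; since $\fn\in\elle\infty$, each $\un$ is the unique finite-energy solution of \rife{pb}, so truncations of $\un$ and functions of $\C$ are admissible test functions. Taking $\frac1\eps\,\tk{\eps}{\un}$ as test function and letting $\eps\to0$ yields $\intom{|\un|^q}\le\|\fn\|_{\luno}\le C$ (recall $\fn$ is bounded in $\luno$), so $|\un|^{q-1}\un$ is bounded in $\luno$, hence $-\Delta\un=\fn-|\un|^{q-1}\un$ is bounded in $\luno$, and the Boccardo--Gallou\"et estimates bound $\un$ in $\soba{q_0}$. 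Up to a subsequence, $\un\rightharpoonup u$ in $\soba{q_0}$, $\un\to u$ in $\luno$ and a.e., $|\un|^{q-1}\un\to|u|^{q-1}u$ a.e., $u\in\elle q$ by Fatou, and $\fn\rightharpoonup f\,dx+\mu$, $|\un|^{q-1}\un\rightharpoonup|u|^{q-1}u\,dx+\lambda$ weakly-$*$ in the sense of measures, for some bounded measures $\mu,\lambda$.

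The next step is to localise the defect measures $\mu,\lambda$ at the origin. By \rife{condbre}, $\fn\to f$ in $L^1(\Omega\diff B_\rho(0))$ for every $\rho>0$, so $\mu=c\,\delta_0$. For $\lambda$ one shows $|\un|^{q-1}\un\to|u|^{q-1}u$ in $L^1_{\rm loc}(\overline\Omega\diff\{0\})$: testing \rife{pb} with $\frac1\eps\,\tk{\eps}{\gk{k}{\un}}\,\xi^2$, where $\xi\in\C$ and $0\notin\supp\xi$, and letting $\eps\to0$ gives
$$\intmu{\{|\un|>k\}}{|\un|^q\,\xi^2}{x}\ \le\ \|\xi\|_\infty^2\intmu{\{|\un|>k\}}{|\fn|}{x}\ +\ 2\intmu{\{|\un|>k\}}{|\xi|\,|\D\xi|\,|\D\un|}{x};$$
since $\meas{\{|\un|>k\}}\le Ck^{-q}$, the equi-integrability of $\fn$ off the origin (a consequence of \rife{condbre}) and the $\soba{q_0}$-bound make the right-hand side go to $0$ as $k\to\infty$, uniformly in $n$, so $\{|\un|^q\}$ is equi-integrable off the origin and Vitali's theorem gives the claimed convergence. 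Hence $\lambda=d\,\delta_0$ and, letting $n\to\infty$ in \rife{pb} in $\mathcal D'(\Omega)$,
$$-\Delta u+|u|^{q-1}u=f+(c-d)\,\delta_0\qquad\text{in }\mathcal D'(\Omega).$$

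The crucial point is $c=d$, and this is where $q\ge\frac{N}{N-2}$ enters. That inequality is equivalent to $2q'\le N$, which is exactly the statement that the point $\{0\}$ has zero $W^{2,q'}$-capacity; accordingly there are $\phi_j\in\C$ with $0\le\phi_j\le1$, $\phi_j\equiv1$ in a neighbourhood of $0$, $\phi_j\to0$ a.e., and $\|\Delta\phi_j\|_{L^{q'}(\Omega)}\to0$. Using $\phi_j$ as test function in \rife{pb} and writing $\intom{\D\un\cdot\D\phi_j}=-\intom{\un\,\Delta\phi_j}$, whose modulus is $\le\|\un\|_{\elle q}\,\|\Delta\phi_j\|_{L^{q'}}\le C^{1/q}\|\Delta\phi_j\|_{L^{q'}}$ uniformly in $n$, and passing to the limit $n\to\infty$ (recalling $\phi_j\equiv1$ near $0$) gives
$$c-d=-\intom{u\,\Delta\phi_j}+\intom{\bigl(|u|^{q-1}u-f\bigr)\phi_j},\qquad\Bigl|\intom{u\,\Delta\phi_j}\Bigr|\le C^{1/q}\|\Delta\phi_j\|_{L^{q'}}.$$
Letting $j\to\infty$, the first term tends to $0$ and the second does too (dominated convergence: $\phi_j\to0$ a.e., $0\le\phi_j\le1$, $|u|^{q-1}u-f\in\luno$). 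Therefore $c=d$, so $u$ solves $-\Delta u+|u|^{q-1}u=f$ in $\mathcal D'(\Omega)$, with $u\in\soba{q_0}$ and $|u|^{q-1}u\in\luno$; the truncation estimates of the first step pass to the limit, so $u$ is the solution of this problem, and since that solution is unique the whole sequence $\un$ converges to $u$.

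The hard part is precisely the identity $c=d$: the possibly nonzero Dirac masses left in the limit by $\fn$ and by the absorption term $|\un|^{q-1}\un$ must be equal, and therefore cancel. The mechanism relies on $\|\un\|_{\elle q}$ being bounded --- the single genuinely nonlinear a priori estimate --- combined with $\|\Delta\phi_j\|_{L^{q'}}\to0$, which is available exactly when $q\ge\frac{N}{N-2}$; when $q<\frac{N}{N-2}$ the origin has positive $W^{2,q'}$-capacity, the test-function argument collapses, and indeed a solution with a Dirac mass as datum then does exist. The remaining points --- that the constructed limit is the (unique) solution in the $L^1$-data sense, and that the full sequence converges --- are routine.
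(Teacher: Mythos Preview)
The paper does not give a proof of this theorem: it is quoted from Brezis \cite{B} as background and motivation, and no argument for it appears in the text. So there is no ``paper's own proof'' to compare your proposal against.

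That said, two remarks on your argument are in order.

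First, a genuine gap. You write ``recall $f_n$ is bounded in $L^1(\Omega)$'', but this is \emph{not} among the hypotheses: condition \rife{condbre} only controls $f_n$ away from the origin and says nothing about $\|f_n\|_{L^1(B_\rho(0))}$. Without that bound your very first estimate $\int_\Omega |u_n|^q\le\|f_n\|_{L^1}$ is useless, the Boccardo--Gallou\"et bound on $\nabla u_n$ is unavailable, and the subsequent weak-$*$ compactness of $f_n$ and of $|u_n|^{q-1}u_n$ in the space of measures fails. Brezis's original proof avoids this by a completely different mechanism: Kato's inequality together with the Keller--Osserman barrier gives a \emph{universal} local $L^\infty$ bound on $u_n$ away from the origin, $|u_n(x)|\le C\,\mathrm{dist}(x,\{0\})^{-2/(q-1)}$, independent of $f_n$; one then passes to the limit locally in $\Omega\setminus\{0\}$ and uses the removable-singularity theorem for the limit equation. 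Your capacitary argument does not recover this.

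Second, if one \emph{adds} the hypothesis that $(f_n)$ is bounded in $L^1(\Omega)$ --- which is exactly the situation of the paper's own Theorem~\ref{noneint} and of \cite{op} --- then your proof is correct and is, in spirit, the elliptic counterpart of the argument the paper runs for its main result: one replaces the Keller--Osserman barrier by the combination of the $L^q$ a priori bound on $u_n$ and the fact that $\{0\}$ has zero $W^{2,q'}$-capacity precisely when $q\ge N/(N-2)$, using cut-off functions $\phi_j$ with $\|\Delta\phi_j\|_{L^{q'}}\to 0$ to show that the defect Dirac masses left by $f_n$ and by the absorption term cancel. This is a genuinely different route from Brezis's, closer to \cite{op} and to the present paper, but it proves a weaker statement than Theorem~\ref{dirac} as written.
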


If $f = 0$, an example of functions $\fn$ satisfying condition \rife{condbre} is that of a sequence
of nonnegative $\elle\infty$ functions converging in the weak$^\ast$ topology of measures to
$\delta_0$, the Dirac mass concentrated at the origin. In this case, $\un$ converges to zero, which is not a solution of the equation with $\delta_0$ as datum. The
result of Theorem \ref{dirac} is strongly connected with a theorem by P. B\'enilan and H. Brezis (see
\cite{bb}), which states that the problem $-\Delta u + |u|^{q-1}\,u = \delta_0$ has no distributional
solution if $q \geq \frac{N}{N-2}$. On the other hand (see \cite{BBC} and \cite{B}), if $q <
\frac{N}{N-2}$, then there exists a unique solution of
$$
\begin{cases}
-\Delta u + |u|^{q-1}\,u = \de_0,&\text{in}\ \ \Omega\\
u = 0&\ \text{on}\ \ \partial\Omega.
\end{cases}
$$

The \emph{threshold} $\frac{N}{N-2}$ essentially depends on  the linearity of the
laplacian operator, and  on the
fact that the Dirac mass is a measure which is concentrated on a point: a set of zero elliptic $N$-capacity.

In \cite{op} this result was improved to the nonlinear framework; there  the authors  actually proved that, if $\lambda$ is a measure concentrated on a set of zero elliptic $r$-capacity, $r<q$, and $q$ is large enough, then
 problem
$$
\begin{cases}
-\Delta_p u + |u|^{q-1}\,u = \lambda,&\text{in}\ \ \Omega\\
u = 0&\ \text{on}\ \ \partial\Omega,
\end{cases}
$$
has no solutions in a very \emph{strong sense}; that is, if we approximate $\lambda$ with smooth functions in the narrow topology of measures then the approximating solutions $u_n$ converge to $0$.
In the same paper the result is proved for more general Leray-Lions type nonlinear operators (see \cite{ll}).

\medskip

In this paper, we will combine an idea of \cite{op} with a suitable parabolic \emph{cut-off lemma} to prove a general nonexistence  result in the framework of nonlinear parabolic problems with singular measures as data.
\medskip

If $\Omega$ is an open bounded subset of $\rn$, $N>2$, and $T>0$  we denote by
$Q$ the parabolic cylinder $(0,T)\times\Omega$. If $\lambda$ a bounded Radon
measure on $Q$, then we will say that $\lambda$ is concentrated on a Borel set
$B$, and write $\la=\la_{\LL B}$, if $\la(E)=\la(B\cap E)$, for any measurable subset $E$ of $Q$.

Our main result (see Theorem \ref{none} below) states nonexistence of solutions for parabolic problems in the sense of approximating sequences; as a particular case of it we will obtain the following:

\begin{theorem}\label{noneint}
Let  $f_n$  be a sequence of functions in $\liq$ such that 
 $$
\lim_{n\to\infty}\int_Q\varphi\ f_n\ dx = \int_Q\varphi\ d\lambda, \ \ \ \forall \ \varphi\in C(\overline{Q}), 
$$
where $\lambda$ is a bounded Radon measure on $Q$ concentrated on a
set of zero parabolic $r$-capacity, and let
\begin{equation}\label{assint}
q>\frac{r}{r-2}\,.
\end{equation}
Then the solutions of
\begin{equation}\label{lim1int}
\begin{cases}
    (u_{n})_{t}- \Delta u_n+|u_n|^{q-1}u_n=f_n & \text{in}\ (0,T)\times\Omega \\
    u_n(0,x)=0 & \text{in}\ \Omega,\\
u_n(t,x)=0 & \text{on}\  (0,T)\times\partial\Omega,
\end{cases}
\end{equation}
are such that both $u_n$ and $|\nabla u_n|$  converge to $0$ in $\pelle{1}$.

Moreover,
$$
\lim_{n\to\infty}\int_{Q} |u_n |^{q-1}u_n\varphi\ dx=\int_{Q} \varphi \ d\lambda\,,\ \ \forall\ \varphi\in C_{0}(Q).
$$
\end{theorem}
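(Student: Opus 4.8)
\emph{Plan.} Theorem~\ref{noneint} is the model case $p=2$ of the general result Theorem~\ref{none}, and I would argue directly on \rife{lim1int}, in three steps.

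\emph{Step 1: a priori estimates and compactness.} I would start from uniform bounds. Since $\{f_n\}$ converges to $\lambda$ it is bounded in $L^1(Q)$; using $\tkun$ as a test function in \rife{lim1int} gives, uniformly in $n$ and with a bound of order $k$, that $\tkun$ is bounded in $\sobl\cap\lild$ and that $\int_Q|u_n|^{q-1}u_n\,\tkun\le Ck$. Taking $k=1$ yields $u_n$ bounded in $\pelle{q}$, hence $|u_n|^{q-1}u_n$ bounded in $L^1(Q)$; the classical parabolic estimates for $L^1$ data (of Boccardo--Gallou\"et type) add that $u_n$ and $|\D u_n|$ are bounded in $\pelle{\sigma}$ for a suitable $\sigma>1$ and $(u_n)_t$ is bounded in $L^2(0,T;H^{-1}(\Omega))+L^1(Q)$. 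By the usual nonlinear compactness (Aubin--Simon in time and the standard argument for strong convergence of truncated gradients), up to a subsequence $u_n\to u$ a.e.\ in $Q$ and in $\pelle{s}$ for every $s<q$, $\D\tkun\to\D\tku$ strongly in $\sobl$ for every $k$, $\D u_n\rightharpoonup\D u$ weakly in $\pelle{\sigma}$, and $|u_n|^{q-1}u_n\rightharpoonup\Lambda$ in the weak-$*$ sense of measures, with $\Lambda=|u|^{q-1}u\,dx+\mu_s$ for some bounded Radon measure $\mu_s$ on $Q$; passing to the limit in \rife{lim1int}, $u$ is a renormalized solution of $u_t-\Delta u+|u|^{q-1}u=\lambda-\mu_s$ with zero initial datum.

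\emph{Step 2: the parabolic cut-off.} Since $\lambda$ is concentrated on a Borel set $E$ of zero parabolic $r$-capacity, the parabolic cut-off lemma yields, for every $\de>0$, a function $\pd$ with $0\le\pd\le1$, $\pd\in L^r(0,T;W^{1,r}_0(\Omega))$ and $(\pd)_t\in L^{r'}(0,T;W^{-1,r'}(\Omega))$, such that $\pd\to0$ in these norms and a.e.\ in $Q$ as $\de\to0$ while $\pd\equiv1$ near $E$, so that $\int_Q\vp\,\pd\,d\mu=\int_Q\vp\,d\mu$ for every $\vp\in C(\overline Q)$ and every bounded measure $\mu$ concentrated on $E$. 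The hypothesis \rife{assint}, $q>r/(r-2)$, is precisely what makes $q$ large enough (equivalently $r$ sufficiently far above $2$) for the reaction term $|u_n|^{q-1}u_n$ --- only bounded in $L^1(Q)$, with tails controlled by the $\pelle{q}$ bound --- and its limit $\Lambda$ to be paired with $\pd$ and $\D\pd$ with a \emph{quantitative} smallness as $\de\to0$, using the parabolic Sobolev embedding of $\pd$; this is the parabolic analogue of the threshold $q\ge\frac{N}{N-2}$ in Theorem~\ref{dirac}, recovered for $r=N$.

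\emph{Step 3: localisation and conclusion.} Adapting the elliptic strategy of \cite{op}, I would test \rife{lim1int} with $\tkun(1-\pd)$ and with $\tkun\pd$, let $n\to\infty$ (using the a.e.\ convergence and the strong convergence of truncated gradients) and then $\de\to0$: away from a neighbourhood of $E$, i.e.\ on $\{1-\pd=1\}$, the right-hand side tends to $\int_Q(1-\pd)\,d\lambda=0$, so no concentration of the reaction term survives there and $\mu_s$ is concentrated on $E$. Then $\nu:=\lambda-\mu_s$ is a bounded measure concentrated on a set of zero parabolic $r$-capacity and $u$ is a renormalized solution of $u_t-\Delta u+|u|^{q-1}u=\nu$; testing this against $\vp\,\pd$, $\vp\in C_0(Q)$, and letting $\de\to0$ --- here \rife{assint} is used in full, to absorb $\int_Q|u|^{q-1}u\,\vp\,\pd$ and $\int_Q\D u\cdot\D(\vp\,\pd)$ against the capacitary smallness of $\pd$ --- while $\int_Q\vp\,\pd\,d\nu=\int_Q\vp\,d\nu$, forces $\int_Q\vp\,d\nu=0$ for all $\vp\in C_0(Q)$, i.e.\ $\nu=0$. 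Hence $\mu_s=\lambda$ and $u$ solves the homogeneous problem with zero data, so $u\equiv0$ by uniqueness of renormalized solutions and the whole sequence converges. Finally, $u\equiv0$ and the a priori bounds give the claims: $u_n\to0$ in $\pelle{s}$, in particular in $L^1(Q)$, by the $\pelle{q}$ bound and Vitali's theorem; $\D\tkun\to0$ in $\sobl$ for each $k$, and $\int_{\{|u_n|>k\}}|\D u_n|\le\|\D u_n\|_{\pelle{\sigma}}\,|\{|u_n|>k\}|^{1-1/\sigma}\le C\,k^{-q(1-1/\sigma)}$ by Chebyshev uniformly in $n$, whence $\D u_n\to0$ in $L^1(Q)$; and $|u_n|^{q-1}u_n\rightharpoonup\Lambda=\mu_s=\lambda$ is exactly the last displayed limit.

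\emph{Main obstacle.} The delicate point is the time-derivative term once $\pd$ appears in the test function: giving rigorous meaning to and estimating $\int_0^T\langle(u_n)_t,\tkun\,\pd\rangle$ needs an integration by parts in time, which produces a term $\int_0^T\langle(\pd)_t,\Theta_k(u_n)\rangle$ with $\Theta_k(s)=\int_0^sT_k(\tau)\,d\tau$, controllable only via the bound on $(\pd)_t$ provided by the \emph{parabolic} cut-off lemma --- a purely spatial cut-off, as in \cite{op}, does not suffice --- together with a Landes-type time regularization of $u_n$. The second subtlety, tied to \rife{assint}, is to pair the only $L^1$-bounded, possibly concentrating, reaction term with $\pd$ and $\D\pd$ with uniform quantitative smallness, so as to localise $\mu_s$ on $E$ and to exclude a nonzero residual measure $\nu$ concentrated on a set of zero parabolic $r$-capacity.
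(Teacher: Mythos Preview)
Your overall route differs from the paper's. The paper never isolates a ``defect measure'' $\mu_s$ nor tries to prove $\nu=\lambda-\mu_s=0$ a posteriori; it shows \emph{directly} that the limit $u$ is the (unique) entropy solution of the $\lambda$--free problem, by taking in \rife{ane1} the test function
\[
T_k(u_n-\varphi)\,(1-\Psi_{\delta,\eta})\,h_m(u_n),\qquad \varphi\in S^p\cap L^\infty(Q),
\]
with the extra truncation $h_m$ defined in \rife{hm}. The identity \rife{rico} is then obtained at the end by testing \rife{ane1} with a smooth $\psi$ and comparing with the distributional form of the limit equation.

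Your plan has a genuine gap precisely at the point you flag as the ``main obstacle'', but the fix you propose is not sufficient. In Step~3 you test the \emph{limit} equation with $\varphi\,\pd$ and want the left--hand side to vanish as $\de\to0$. The term
\[
\int_Q u\,\varphi\,(\pd)_t
\]
cannot be controlled with what you have: Lemma~\ref{acp} only gives $(\pd)_t=(\pd)_t^1+(\pd)_t^2$ with $(\pd)_t^1$ small in $L^{r'}(0,T;W^{-1,r'}(\Omega))$ and $(\pd)_t^2$ small in $L^1(Q)$. Pairing with $(\pd)_t^2$ would require $u\varphi\in L^\infty(Q)$, which fails ($u$ is only in $L^q(Q)$); pairing with $(\pd)_t^1$ would require $u\varphi\in L^r(0,T;W^{1,r}_0(\Omega))$, which fails as well since $\nabla u\in L^\sigma(Q)$ only for $\sigma<\frac{2q}{q+1}<2<r$. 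The same obstruction hits the first half of your Step~3: with $T_k(u_n)\pd$ as test, integration by parts in time produces $\int_Q\Theta_k(u_n)\,(\pd)_t$, and $\Theta_k(s)\sim k|s|$ is \emph{not} bounded, so the pairing with the $L^1$--part of $(\pd)_t$ is again out of reach uniformly in~$n$. (Note also that from Step~1 you only obtain $u$ as a \emph{distributional} solution of $u_t-\Delta u+|u|^{q-1}u=\lambda-\mu_s$; the word ``renormalized'' there is unjustified and cannot be used to gain regularity.)

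The paper's cure is exactly the missing ingredient: the further cut--off $h_m(u_n)$. It forces the primitive $\Theta_{k,m}(s)=\int_0^sT_k(\tau)h_m(\tau)\,d\tau$ to be \emph{bounded}, so that $\Theta_{k,m}(u_n-\varphi)$ is in $L^\infty(Q)$ and pairs with the $L^1$--part of $(\Psi_{\delta})_t$ with a bound that goes to~$0$ with~$\de$. In addition, the paper works at the approximating level throughout and passes to the limit in the entropy inequality, rather than testing the limit distributional equation with $\varphi\,\pd$. If you want to keep your $\mu_s$--based strategy, you must insert an $h_m$--type truncation (and work on $u_n$, not on $u$) before letting $\de\to0$; otherwise the time term blocks the argument.
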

\begin{remark}
Theorem \ref{noneint} states that in fact the sets of zero $r$-capacity are in some sense \emph{removable singularities} for problem
\begin{equation}\label{ne1int}
\begin{cases}
    u_{t}- \Delta u+|u|^{q-1}u=f & \text{in}\ (0,T)\times\Omega \\
    u(0,x)=0 & \text{in}\ \Omega,\\
u(t,x)=0 & \text{on}\  (0,T)\times\partial\Omega,
\end{cases}
\end{equation}
 with large $q$, since the approximation does not see them.  In fact,  the singular measure $\la$ turns out to be \emph{cancelled out}  by the zero order terms of the approximating problems in the  weakly$^\ast$ sense of the measures.

Moreover, as we shall prove, the convergence is actually stronger than the one stated in Theorem \ref{noneint}.

 Let us finally explicitly remark that the choice of the homogeneous initial datum is not restrictive; indeed, since the result is obtained for measures on $Q$ which do not charge the set $\{0\}\times\Omega$ then our argument is, as we will see, essentially independent on the initial datum.
\end{remark}

\section{Basic assumptions and tools}

 Let $p>1$; we  recall the notion of \emph{parabolic $p$-capacity} associated
to our problem (for further details see
\cite{pierre}, \cite{dpp}).

\begin{definition}\label{cappara1}
Let $Q=Q_{T}=(0,T)\times\Omega$ for any fixed $T>0$, and let us
define $V=\sob \cap L^{2}(\Omega)$, endowed with its natural norm
$\|\cdot\|_{\sob}+\|\cdot\|_{L^{2}(\Omega)}$ and
\begin{equation}\label{W}
W=\left\{ u\in L^{p}(0,T;V),\ u_{t}\in L^{p'}(0,T;V') \right\},
\end{equation}
endowed with its natural norm $\|u\|_{W}=\|u\|_{ L^{p}(0,T;V)}+
\|u_{t}\|_{ L^{p'}(0,T;V')}$. If $U\subseteq Q$ is an open set, we
define the \emph{parabolic $p$-capacity} of $U$ as
\[
\capp(U)=\inf\{\|u\|_{W}:u\in W,u\geq \chi_{U}\  \text{a.e. in}\  Q\},
\]
where as usual we set $\inf\emptyset=+\infty$; we then define for any Borel
set $B\subseteq Q$
\[
\capp(B)=\inf\{\capp(U), U \ \text{open set of}\ Q, B\subseteq U\}.
\]
\end{definition}

Let us state our basic assumptions; let $\Omega$ be a bounded, open subset of $\rn$, $T$ a positive number and
$Q=(0,T)\times\Omega$. Let $a :(0,T) \times\Omega
\times \rn \to \rn$ be a Carath\'eodory function (i.e., $a(\cdot,\cdot,\xi)$
is measurable on $\tom$ for every $\xi$ in $\rn$, and $a(t,x,\cdot)$ is
continuous on $\rn$ for almost every $(t,x)$ in $\tom$), such that the
following holds:
\be
a(t,x,\xi)  \cdot\xi \geq \alpha\,|\xi|^p\,,\quad p>1\,,
\label{coercp}
\ee
\be
|a(t,x,\xi)| \leq \beta\,[b(t,x) + |\xi|^{p-1}]\,,
\label{cont}
\ee
\be
[a(t,x,\xi) - a(t,x,\eta)] (\xi - \eta) > 0\,,
\label{monot}
\ee
for almost every $(t,x)$ in $Q$, for every $\xi$, $\eta$ in $\rn$, with
$\xi \neq \eta$, where
$\alpha$ and $\beta$ are two positive constants, and
$b$ is a nonnegative function in $L^{p'}(\tom)$.

We define the differential operator
$$
A(u) = -\div (a(t,x,\D u))\,, \qquad u \in \psob\,.
$$
Under assumptions \rife{coercp}, \rife{cont} and
\rife{monot}, $A$ is a
 coercive and pseudo\-monotone operator acting from
the space $L^{p}(0,T;\sob)$ into its dual
$\paraduale$.

We deal with problem

\begin{equation}\label{ne1}
\begin{cases}
    u_{t}- \div(a(t,x,\nabla u))+|u|^{q-1}u=g +\lambda & \text{in}\ (0,T)\times\Omega \\
    u(0,x)=0 & \text{in}\ \Omega,\\
u(t,x)=0 & \text{on}\  (0,T)\times\partial\Omega,
\end{cases}
\end{equation}
with $g\in\pelle1$, $q>1$, $a$ satisfying \rife{coercp}, \rife{cont} and \rife{monot}, and $\lambda=\lambda^+ -\lambda^-$ is a bounded measure concentrated on a set $E=E^+ \cup E^-$, such that cap$_{r}$$(E)=0$.

Let us mention that existence of  renormalized solutions (which in
particular turn out to be distributional solutions for problem \rife{ne1}) is
one of the results proved in a forthcoming paper (see \cite{ppp}) in the case
of diffuse measures as data, that is measures which does not charge the sets of
zero parabolic $p$-capacity.

Let us recall that a sequence of bounded measures $\lambda_n$ on an open set  $D\subset\rn$ narrowly converges to a measure $\lambda$ if
 $$
\lim_{n\to\infty}\int_D\varphi\ d\lambda_n = \int_D\varphi\ d\lambda, \ \ \ \forall \ \varphi\in C(\overline{D}).
$$

 We approximate the data with smooth $g_n$ which converge to $g$ in $\pelle1$ and smooth $f_n=f_{n}^{\oplus} -f_{n}^{\ominus}$, with  $f_{n}^{\oplus}$ and $f_{n}^{\ominus}$  converging, respectively, to $\lambda^+$ and  $\lambda^-$ in the narrow topology of measures. We consider the solutions $u_n$ of
\begin{equation}\label{ane1}
\begin{cases}
    (u_{n})_t- \div(a(t,x,\nabla u_n ))+|u_n|^{q-1}u_n=g_n +f_n & \text{in}\ (0,T)\times\Omega \\
    u_n (0,x)=0 & \text{in}\ \Omega,\\
u_n (t,x)=0 & \text{on}\  (0,T)\times\partial\Omega.
\end{cases}
\end{equation}

 Let us give the notion of entropy solution for parabolic problem \rife{ne1} with a general $g\in\elle1$, recalling that
 \[
S^p = \{ u\in \psob; u_t \in\pw-1p' +  \luq \},
\]
that  $T_{k}(s)=\text{max}(-k,\text{min}(k,s))$ for any  $k>0$, and that 
\[
\Theta_{k}(z)=\int_{0}^{z}T_{k}(s)\ ds,
\]
is  the primitive of the truncation function.
 \begin{definition}\label{edef}
Let $g\in\elle1$ and $\lambda=0$. A
measurable function $u$ is an \emph{entropy solution} of \rife{ne1} if
\be\label{regug2e}
 T_k(u-g)\in \para pp\mbox{ for every $k>0$, }
\ee
\be\label{rasye} t\in [0,T]\mapsto\into\Theta_k (u-g-\varphi)(t,x)\ dx
\ee
is a continuous function for all $k\geq 0$ and all $\varphi \in S^p \cap \liq$, and moreover
\be
\label{reqee}
\begin{array}{l}
 \dys\into \Theta_k (u-g-\varphi)(T,x)\ dx -\into \Theta_k (u-g-\varphi)(0,x)\ dx\\\\
 \dys\quad
 \dys+\int_{0}^{T}\langle\varphi_t , T_k (u-g -\varphi)\rangle\ dt\\\\
 \dys+\int_Q \a{u}\cdot\nabla T_k (u-g-\varphi)\ dxdt \\\\
 \dys\leq \int_Q g T_k (u-g-\varphi)\ dxdt,
\end{array}
\ee
for all $k\geq 0$ and all $\varphi \in S^p \cap \liq$.
\end{definition}

Recall that, thanks to a result of \cite{dp}, the unique entropy solution of problem  \rife{ne1} (with $\la=0$) turns out to coincide with the renormalized solution of the same problem as introduced in \cite{pr} (see also \cite{dpp} and \cite{pe2}).

\medskip

As we said, our main result concerns the nonexistence of solutions for problem \rife{ne1} in the sense of approximating sequences; let us state it.
\begin{theorem}\label{none}
Let $1<p<r$, and
\begin{equation}\label{ass}
q>\frac{r(p-1)}{r-p}, 
\end{equation}
and let $u_n$ be the unique solution of problem \rife{ane1}. 
Then $|\nabla u_n|^{p-1}$ converges strongly to $|\nabla u |^{p-1}$ in $\pelle{\sigma}$ with $\dys \sigma<\frac{pq}{(q+1)(p-1)}\ $, where $u$ is the unique entropy (renormalized) solution of problem
\begin{equation}\label{lim1}
\begin{cases}
    u_{t}- \div(a(t,x,\nabla u))+|u|^{q-1}u=g & \text{in}\ (0,T)\times\Omega \\
    u(0,x)=0 & \text{in}\ \Omega,\\
u(t,x)=0 & \text{on}\  (0,T)\times\partial\Omega.
\end{cases}
\end{equation}
Moreover,
\begin{equation}\label{rico}
\lim_{n\to\infty}\int_{Q} |u_n |^{q-1}u_n\varphi\ dx=\int_{Q} |u |^{q-1}u\varphi\ dx+\int_{Q} \varphi \ d\lambda\,,\ \ \forall\ \varphi\in C_{0}(Q).
\end{equation}
\end{theorem}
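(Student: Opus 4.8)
The plan is to pass to the limit in the approximating problems \rife{ane1}, isolating the defect measure that the zero order term creates, and then to show --- this is where the parabolic cut-off lemma and the threshold \rife{ass} enter --- that this defect is exactly $\la$; the conclusion is then a matter of uniqueness of entropy solutions.

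\emph{Step 1 (a priori estimates and compactness).} The data $g_n+f_n$ are bounded in $\pelle1$ (narrow convergence forces bounded total mass). Testing \rife{ane1} with $\tk k{u_n}$ gives $\|\D\tk k{u_n}\|_{\pelle p}^p\le Ck$, boundedness of $u_n$ in $L^\infty(0,T;\luno)$, and the parabolic Boccardo--Gallou\"et estimates; testing with $\frac1k\tk k{u_n}$ and letting $k\to0$ gives $\intQ|u_n|^q\,dxdt\le C$, so that $|u_n|^{q-1}u_n$ is bounded in $\pelle1$; combining the $L^q$--bound with the truncation estimates shows that $|\D u_n|^{p-1}$ is bounded in $\pelle\sigma$ for every $\sigma<\frac{pq}{(q+1)(p-1)}$. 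By the $L^1$--theory for parabolic problems (energy estimates, compactness of $\{(u_n)_t\}$ in $L^{p'}(0,T;W^{-1,p'}(\Omega))+\luq$, and the Landes/Boccardo--Murat--Puel monotonicity argument) we may assume, along a subsequence, that $u_n\to u$ a.e.\ in $Q$ and in $\pelle1$, $\tk k{u_n}\rightharpoonup\tk ku$ weakly in $\psob$ for each $k$, and $\D u_n\to\D u$ a.e.\ in $Q$; hence $\a{u_n}\to\a u$ in $\pelle1$, and by the uniform bound above together with Vitali's theorem, $|\D u_n|^{p-1}\to|\D u|^{p-1}$ strongly in $\pelle\sigma$ for every $\sigma<\frac{pq}{(q+1)(p-1)}$. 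Moreover $|u_n|^{q-1}u_n$ converges weakly-$\ast$ to a bounded Radon measure $\mu$ on $Q$; decomposing according to the sign of $u_n$ and using Fatou's lemma, $\mu=|u|^{q-1}u+\nu$ for a suitable bounded measure $\nu$ (set $\nu^\pm=\mu^\pm-(u^\pm)^q\ge0$).

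\emph{Step 2 (the limit equation).} Passing to the limit in the distributional formulation of \rife{ane1} --- each term converges by Step 1, while $\intQ\vp(g_n+f_n)\,dx\to\intQ\vp\,g\,dx+\intQ\vp\,d\la$ by the $L^1$ and narrow convergences --- one obtains, for every $\vp\in C_c^\infty(Q)$,
\[
-\intQ u\,\vp_t\,dxdt+\intQ\a u\cdot\D\vp\,dxdt+\intQ|u|^{q-1}u\,\vp\,dxdt+\intQ\vp\,d\nu=\intQ\vp\,g\,dxdt+\intQ\vp\,d\la,
\]
that is, $u_t-\dive(\a u)+|u|^{q-1}u=g+(\la-\nu)$ in $\mathcal D'(Q)$, with $u(0,\cdot)=0$; moreover $u$ inherits from Step 1 (via Fatou's lemma) the truncation regularity and energy decay characteristic of an entropy/renormalised solution.

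\emph{Step 3 (identification $\nu=\la$; the crucial step).} Since $\mathrm{cap}_r(E)=0$, for each $\de>0$ the parabolic cut-off lemma provides $\pd$ with $0\le\pd\le1$, $\pd\equiv1$ on an open neighbourhood of $E$, $\mis\{\pd>0\}\to0$, and $\pd\to0$, as $\de\to0$, both a.e.\ and in the space $W$ of Definition~\ref{cappara1} built on the exponent $r$ in place of $p$ --- in particular $\D\pd\to0$ in $\pelle r$, hence in $\pelle{\sigma'}$ whenever $\sigma'\le r$ since $Q$ is bounded --- together with the time-regularisation needed to handle $(\pd)_t$. Testing \rife{ane1} with $\vp\,\pd$ and with $\vp(1-\pd)$, for $\vp\in C_c^\infty(Q)$, $\vp\ge0$ (these being made admissible through the renormalised formulation and a Landes-type time regularisation), and letting first $n\to\infty$ and then $\de\to0$, the diffusion term carrying $\D\pd$ is estimated by H\"older's inequality,
\[
\Big|\intQ\a{u_n}\cdot\vp\,\D\pd\,dxdt\Big|\le\|\vp\|_{\infty}\,\big\|\a{u_n}\big\|_{\pelle\sigma}\,\|\D\pd\|_{\pelle{\sigma'}}\le C\,\|\D\pd\|_{\pelle r},
\]
whose last member tends to $0$ as $\de\to0$ uniformly in $n$ --- $\a{u_n}$ being bounded in $\pelle\sigma$ by Step 1 --- as soon as $\sigma$ is chosen with $r'\le\sigma<\frac{pq}{(q+1)(p-1)}$, i.e.\ $\sigma'\le r$; and such a choice is possible \emph{precisely} when $q>\frac{r(p-1)}{r-p}$, that is, under assumption \rife{ass}. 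The remaining lower order, data and time-derivative terms carrying $\pd$ or $1-\pd$ are controlled by means of $0\le\pd\le1$, $\pd\to0$ a.e., $\mis\{\pd>0\}\to0$, the equi-integrability of $\{g_n\}$, and the fact that $\pd\equiv1$ on the neighbourhood of $E$ that carries $\la$ (and, as one checks along the way, the singular part of $\mu$ and $\nu$); in particular $\intQ|u_n|^{q-1}u_n\vp\pd\,dxdt\to\intQ\vp\pd\,d\mu$ is legitimate because $\vp\pd$ coincides with the continuous function $\vp$ on that neighbourhood. One obtains first that $\nu$ is concentrated on $E$, and then that $\intQ\vp\,d\nu=\intQ\vp\,d\la$ for all nonnegative $\vp\in C_c^\infty(Q)$, i.e.\ $\nu=\la$.

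\emph{Step 4 (conclusion), and the main difficulty.} With $\nu=\la$, Step 2 says that $u$ is a distributional solution of \rife{lim1} with $g\in\pelle1$; having the regularity recorded in Step 2, it is the unique entropy (equivalently renormalised) solution of \rife{lim1} (\cite{dp},\cite{pr}), and therefore the strong $\pelle\sigma$--convergence of $|\D u_n|^{p-1}$ proved in Step 1 is exactly the first assertion. Finally, for $\vp\in C_0(Q)$,
\[
\lim_{n\to\infty}\intQ|u_n|^{q-1}u_n\,\vp\,dx=\intQ\vp\,d\mu=\intQ|u|^{q-1}u\,\vp\,dx+\intQ\vp\,d\nu=\intQ|u|^{q-1}u\,\vp\,dx+\intQ\vp\,d\la,
\]
which is \rife{rico}; since every limit is uniquely identified, the whole sequence converges. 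The main obstacle lies in Step 3: making the $\pd$--test functions admissible in the parabolic $L^1$ framework (the $(\pd)_t$ contribution has to be absorbed through a Landes-type time regularisation inside the renormalised formulation), passing to the limit in $\intQ|u_n|^{q-1}u_n\vp\pd\,dxdt$ despite the merely quasi-continuous nature of $\pd$ and the possible singular part of $\mu$, and --- the genuinely sharp point --- verifying that the $\D\pd$--term is infinitesimal uniformly in $n$, which is exactly what imposes the threshold $q>\frac{r(p-1)}{r-p}$.
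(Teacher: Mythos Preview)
Your overall plan---pass to the limit, isolate a defect measure $\nu$ from the zero order term, identify $\nu=\lambda$ via the cut-off $\psi_\delta$, then invoke uniqueness---is a reasonable alternative to the paper's route, and you correctly locate where the threshold \rife{ass} enters (it is exactly what makes $a(t,x,\nabla u_n)$ bounded in $\pelle{\sigma}$ with $\sigma\ge r'$, so that $\int_Q a(t,x,\nabla u_n)\cdot\nabla\psi_\delta\to0$ uniformly in $n$). The paper, by contrast, does not introduce $\nu$ at all: it first proves the localized equi-integrability estimate
\[
\int_{\{|u_n|>2m\}}|u_n|^q\,(1-\Psi_\delta)\,dxdt=\omega(n,m,\delta),
\]
and then verifies the entropy inequality for $u$ directly by taking $T_k(u_n-\varphi)(1-\Psi_{\delta,\eta})h_m(u_n)$ as test function.

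However, your Step~3 as written has a real gap. Testing \rife{ane1} with the \emph{smooth} functions $\varphi\psi_\delta$ and $\varphi(1-\psi_\delta)$ and letting $n\to\infty$ simply reproduces the distributional identity of your Step~2 against those particular test functions; it contains no new information and cannot separate $\nu$ from $\lambda$. What is actually needed is to show that the lack of compactness of $|u_n|^{q-1}u_n$ is concentrated on $E$, i.e.\ that $|u_n|^q(1-\Psi_\delta)$ is equi-integrable. This requires testing with a \emph{nonlinear} function of the solution, namely $\beta_m(u_n)(1-\Psi_\delta)$ as in the paper's Step~2: the sign of $\int |u_n|^{q-1}u_n\,\beta_m(u_n)(1-\Psi_\delta)$ is then exploited together with the smallness of $\int f_n^{\oplus}(1-\Psi_\delta)$ coming from Lemma~\ref{acp}, and the $\nabla\Psi_\delta$ contribution is controlled via \rife{ass}. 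Without this estimate you cannot conclude that $\nu$ is carried by $E$, and your subsequent claim $\nu=\lambda$ is unsupported.

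There is a second gap in Step~4. Knowing that $u$ is a distributional solution of \rife{lim1} with $g\in\pelle1$ and that $T_k(u)\in\psob$ does \emph{not} make $u$ the entropy (or renormalized) solution: distributional solutions with $L^1$ data are not unique, and the phrase ``energy decay characteristic of a renormalised solution'' hides exactly the estimate you have not proved. The paper closes this by taking $T_k(u_n-\varphi)(1-\Psi_{\delta,\eta})h_m(u_n)$ as test function and passing to the limit term by term (the double cut-off $\Psi_{\delta,\eta}$ is used so that the $(\Psi_\delta)_t$ contribution can be made small, and the previously established equi-integrability of $|u_n|^q(1-\Psi_{\delta,\eta})$ handles the zero order term). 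If you want to stay within your defect-measure framework, you must at least recover the renormalized energy condition $\frac{1}{m}\int_{\{m<|u|<2m\}}a(t,x,\nabla u)\cdot\nabla u\,dxdt\to0$, and that again forces you through the $\beta_m(u_n)(1-\Psi_\delta)$ computation.
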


\section{Proof of Theorem \ref{none}}

In this section we prove Theorem \ref{none}. From here on $\omega$ will  indicate any quantity that vanishes as the parameters in its argument go to
their (obvious, if not explicitly stressed) limit point with the same order in which they appear,  that is, as an example
\[
\dys\lim_{\delta\rightarrow 0^+}\limsup_{m\rightarrow +\infty}
\limsup_{n\rightarrow \infty} |\omega(n,m,\delta)|=0.
\]
Moreover, for the sake of
simplicity, in what follows, the convergences, even if not explicitly stressed, may be understood
to be taken possibly up to a suitable subsequence extraction.

To prove Theorem \ref{none} we will use the following Lemma proved in \cite{pe2}.
\begin{lemma}\label{acp}
Let $\mu=\lsp-\lsm$ be a bounded Radon measure on $Q$, where $\lsp$ and $\lsm$ are  nonnegative and concentrated, respectively, on two disjoint sets $E^+$ and $E^-$  of zero $r$-capacity. Then, for every $\delta>0$, there exist two compact sets $\kdp\subseteq E^+$ and $\kdm\subseteq E^-$ such that

\begin{equation} \label{acp1}
\lsp(E^+\backslash \kdp)\leq \delta, \ \ \ \ \lsm(E^-\backslash \kdm)\leq \delta,
\end{equation}
and there exist $\pdp, \ \pdm\in  C^{1}_{0}(Q)$, such that
\begin{equation}\label{acp2}
\pdp,\ \pdm\equiv 1\ \text{respectively on}\ \ \kdp, \ \kdm,
\end{equation}
\begin{equation}\label{acp3}
0\leq\pdp,\ \pdm\leq 1,
\end{equation}
\begin{equation}\label{acp4}
\supp(\pdp) \cap \supp(\pdm)\equiv \emptyset.
\end{equation}
Moreover
\begin{equation}\label{acp5}
\|\pdp\|_{S^r}\leq \delta,\ \ \ \|\pdm\|_{S^r}\leq \delta,
\end{equation}
and, in particular, there exists a decomposition of $(\pdp)_{t}$ and a decomposition  of $(\pdm)_{t}$ such that
\begin{equation}\label{acp6}
\dys\|(\pdp)_{t}^{1}\|_{L^{r'}(0,T;W^{-1,r'}(\Omega))}\leq \delta,\ \ \ \|(\pdp)_{t}^{2}\|_{\luq}\leq \delta,
\end{equation}
\begin{equation}\label{acp7}
\dys\|(\pdm)_{t}^{1}\|_{L^{r'}(0,T;W^{-1,r'}(\Omega))}\leq \delta,\ \ \ \|(\pdm)_{t}^{2}\|_{\luq}\leq \delta,
\end{equation}
and both $\pdp$ and $\pdm$ converge to zero weakly$^\ast$ in $\liq$, in $\luq$, and, up to subsequences, almost everywhere as $\delta$ vanishes.

Moreover, if $f_n= \fnp-\fnm$ is as in \rife{ane1}, we have
\begin{equation}\label{acp8}
\int_{Q}\pdm \fnp=\omega(n,\delta),\ \ \ \ \int_{Q}\pdm\ d\lsp\leq\delta ,
\end{equation}
\begin{equation}\label{acp9}
\int_{Q}\pdp \fnm=\omega(n,\delta),\ \ \ \ \int_{Q}\pdp\ d\lsm\leq\delta ,
\end{equation}
\begin{equation}\label{acp10}
\int_{Q}(1-\pdp)\fnp=\omega(n,\delta),\ \ \ \ \int_{Q}(1-\pdp)\ d\lsp\leq \delta,
\end{equation}
\begin{equation}\label{acp11}
\int_{Q}(1-\pdm) \fnm=\omega(n,\delta),\ \ \ \ \int_{Q}(1-\pdm)\ d\lsm\leq \delta.
\end{equation}
\end{lemma}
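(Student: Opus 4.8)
The plan is to manufacture $\pdp,\pdm$ out of three ingredients: inner regularity of the measures $\lsp,\lsm$, the very definition of parabolic $r$-capacity, and a fixed localization forcing the two supports apart. First I would use that $\lsp$ and $\lsm$ are finite Borel (hence Radon) measures on $Q$ to pick, for each $\de>0$, compact sets $\kdp\subseteq E^+$ and $\kdm\subseteq E^-$ for which \rife{acp1} holds. Since $E^+\cap E^-=\emptyset$, the compacts $\kdp,\kdm$ are disjoint and contained in the open set $Q$, so
\[
d_\de:=\min\{\,\mathrm{dist}(\kdp,\kdm),\ \mathrm{dist}(\kdp,\partial Q),\ \mathrm{dist}(\kdm,\partial Q)\,\}>0;
\]
this strictly positive separation is exactly what will eventually yield \rife{acp4} and the ``cross'' bounds in \rife{acp8}--\rife{acp9}.

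Next, since $\kdp$ and $\kdm$ inherit zero $r$-capacity from $E^\pm$, I would unravel Definition \ref{cappara1} with $p$ replaced by $r$ — so that $V$ there becomes $V_r:=W^{1,r}_0(\Omega)\cap L^2(\Omega)$ and $W$ the corresponding space $W_r$ — obtaining open neighbourhoods $U_n^\pm$ of $\kdp,\kdm$ and functions $w_n^\pm\in W_r$ with $w_n^\pm\ge\chi_{U_n^\pm}$ a.e. and $\|w_n^\pm\|_{W_r}\to0$. The time-derivative splitting is already built in here: since $(w_n^\pm)_t\in L^{r'}(0,T;V_r')$ with $V_r'=W^{-1,r'}(\Omega)+L^2(\Omega)$, and $L^{r'}(0,T;L^2(\Omega))\hookrightarrow\luq$ because $\Omega$ is bounded, the smallness of $\|(w_n^\pm)_t\|_{L^{r'}(0,T;V_r')}$ produces a decomposition of $(w_n^\pm)_t$ into a part small in $L^{r'}(0,T;W^{-1,r'}(\Omega))$ and a part small in $\luq$. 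Truncating $w_n^\pm$ at level $1$, retaining the positive part, and mollifying, I may moreover take $w_n^\pm\in C^\infty_c(Q)$, $0\le w_n^\pm\le1$ and $w_n^\pm\equiv1$ on a neighbourhood of $\kdp$, resp. $\kdm$, up to arbitrarily small errors in all the norms above. I would then localize: fix, depending only on $\de$, functions $\theta^\pm\in C^\infty_c(Q)$ with $0\le\theta^\pm\le1$, equal to $1$ on the $(d_\de/3)$-neighbourhood of $\kdp$ resp. $\kdm$, with disjoint supports, and set $\pdp,\pdm$ equal to a mollification of $\theta^+w_{n(\de)}^+$, $\theta^-w_{n(\de)}^-$ for $n(\de)$ large.

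With this construction $\pdp,\pdm\in C^1_0(Q)$ and \rife{acp2}, \rife{acp3}, \rife{acp4} are immediate. For \rife{acp5}--\rife{acp7} I would expand by the Leibniz rule, $\nabla\pdp=\theta^+\nabla w_{n(\de)}^+ + w_{n(\de)}^+\nabla\theta^+$ and $(\pdp)_t=\theta^+(w_{n(\de)}^+)_t+(\theta^+)_t w_{n(\de)}^+$: the coefficients $\theta^+,\nabla\theta^+,(\theta^+)_t$ are bounded by a constant depending only on $\de$, while $\|w_n^+\|_{W_r}\to0$ forces $w_n^+\to0$ in $L^r(Q)$ and in $\luq$ (via $W_r\hookrightarrow L^r(0,T;W^{1,r}_0(\Omega))\cap C([0,T];L^2(\Omega))$), so choosing $n(\de)$ large absorbs all error terms, gives $\|\pdp\|_{S^r}\le\de$ (hence \rife{acp5}, likewise $\pdm$), and, regrouping the first term of $(\pdp)_t$ with the $W^{-1,r'}$ part and the rest with the $\luq$ part of the Step-2 splitting, yields \rife{acp6}--\rife{acp7}. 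The convergences then follow at once: $0\le\pdp,\pdm\le1$ gives boundedness in $\liq$, and $\|\pdp\|_{\luq}\le\|\pdp\|_{L^r(Q)}\to0$ gives strong $\luq$ convergence, hence (up to a subsequence) a.e. convergence and weak$^\ast$ convergence to $0$ in $\liq$. Finally, for the integral estimates: by the support separation $\pdm\equiv0$ on $\kdp$, so, $\lsp$ being concentrated on $E^+$,
\[
\intQ\pdm\,d\lsp=\intmu{E^+\diff\kdp}{\pdm}{\lsp}\le\lsp(E^+\diff\kdp)\le\de,
\]
and symmetrically $\intQ\pdp\,d\lsm\le\de$; since $\pdp\equiv1$ on $\kdp$, $\intQ(1-\pdp)\,d\lsp\le\lsp(E^+\diff\kdp)\le\de$, and likewise for $\pdm,\lsm$. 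Since $\fnp,\fnm\ge0$ with $\fnp\rightharpoonup\lsp$, $\fnm\rightharpoonup\lsm$ narrowly and $\pdp,\pdm,1-\pdp,1-\pdm\in C(\overline Q)$, passing to the limit in $n$ and using these bounds gives, e.g., $\lim_n\intQ\pdm\,\fnp=\intQ\pdm\,d\lsp\le\de$ with nonnegative integrand, whence $\intQ\pdm\,\fnp=\ond$; the other three estimates in \rife{acp8}--\rife{acp11} are obtained identically.

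The genuinely delicate step is the second one: producing a single function that is simultaneously $C^1_0(Q)$, valued in $[0,1]$, identically $1$ on a compact set of zero $r$-capacity, small in $S^r$, \emph{and} whose time derivative splits cleanly between $L^{r'}(0,T;W^{-1,r'}(\Omega))$ and $\luq$. Unlike in the elliptic case, truncation and the chain rule do not act transparently on the parabolic space $W_r$ — the datum $(w_n^\pm)_t$ lives only in $L^{r'}(0,T;V_r')$, not in a space of genuine functions — so keeping the two-sided bound $0\le\cdot\le1$ alive through truncation, mollification and the multiplication by $\theta^\pm$, while still controlling every Leibniz error term in the $S^r$ norm, is what forces the careful capacitary-potential machinery; this is the technical content one imports from \cite{pierre}, \cite{dpp} and \cite{pe2}.
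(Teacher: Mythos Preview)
The paper does not actually prove this lemma: it is stated and immediately attributed to \cite{pe2} (``the following Lemma proved in \cite{pe2}''), and the text then passes directly to the proof of Theorem~\ref{none}. Your sketch is therefore doing strictly more than the paper does here, and it follows what is essentially the argument of \cite{pe2} (and, behind it, of \cite{dpp} and \cite{pierre}): inner regularity of $\lsp,\lsm$ to produce $\kdp,\kdm$; the definition of parabolic $r$-capacity to get near-minimizers $w^\pm$ with small $W_r$-norm; truncation at level~$1$, mollification and multiplication by fixed cut-offs $\theta^\pm$ with disjoint supports; and finally the narrow convergence of $\fnp,\fnm$ to obtain \rife{acp8}--\rife{acp11}. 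The identification $V_r'=W^{-1,r'}(\Omega)+L^2(\Omega)$ together with $L^{r'}(0,T;L^2(\Omega))\hookrightarrow\luq$ is exactly the mechanism that yields the splitting \rife{acp6}--\rife{acp7}, and your Leibniz bookkeeping for $\nabla\pdp$ and $(\pdp)_t$ is correct.

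You are also right that the only genuinely nontrivial point is the stability of the parabolic space $W_r$ under the truncation $s\mapsto T_1(s^+)$: the time derivative lives in $L^{r'}(0,T;V_r')$ and not in a space of functions, so the chain rule has to be justified via the capacitary/regularization machinery of \cite{pierre} and \cite{dpp}. That is precisely what one imports from \cite{pe2}; once it is granted, the rest of your argument goes through as written.
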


\medskip

 For the convenience of the reader we will split the proof of Theorem \ref{none} in three steps. In the first one we prove some basic estimates on the approximating solutions, while the second step is devoted to check how the zero order term behaves far from the support of $\lambda$; finally, in the third step we conclude the proof by showing that the limit function $u$ is an entropy solution of  problem \rife{lim1} and \rife{rico} holds true.

\begin{proof}[Proof of Theorem \ref{none}]
\noindent {\bf Step $1$.} Basic estimates.

Taking $T_k (u_n)$ as test function in the weak formulation of \rife{ane1}, we readily have the following estimates on the approximating solutions:
\begin{equation}\label{tkun}
\int_{Q}|\nabla \tkun|^p \leq Ck,
\end{equation}
\begin{equation}\label{cl1}
\sup_{t}\into |u_n |\leq C
\end{equation}
and moreover, since,
\[
k\int_{\{|u_n |\geq k\}}|u_n |^q\leq \int_{Q}|u_n|^q |\tkun|\leq Ck,
\]
so that
\[
k^q\meas{\{|u_n |\geq k\}}\leq C,
\]
and
\[
\int_{\{|u_n |<k \}}|u_n |^q\leq C k^q,
\]
we have
\[
|u_n|^q \ \ \text{is bounded in}\ \  \pelle{1}.
 \]
 Because of this fact, using \rife{tkun}, one can prove, reasoning as in \cite{bdgo},
 \[
 |\nabla u_n |^{p-1} \ \text{is bounded in } L^\rho (Q),  \ \dys \ \text{for any }\ \rho<\frac{pq}{(q+1)(p-1)}.
 \]
 Moreover $u_n$ (up to subsequences) converges almost everywhere to a function $u$, and, looking at the equation in \rife{ane1}, we have that
\[
 (u_{n})_t- \div(a(t,x,\nabla u_n ))
\]
is bounded in $\pelle{1}$ and so by Theorem $3.3$ of \cite{bdgo} we have that
\[
\nabla u_n \longrightarrow \nabla u\ \ \text{a.e. on}\  Q.
\]
Therefore, thanks to the growth condition on $a$, we have that both
\begin{equation}
|\nabla u_n|^{p-1}\longrightarrow |\nabla u|^{p-1}\ \ \text{strongly in $(L^{\rho}(Q))^N$}
\end{equation}
and
\begin{equation}\label{conv}
a(t,x,\nabla u_n)\longrightarrow a(t,x,\nabla u)\ \ \text{strongly in $(L^{\rho}(Q))^N$}
\end{equation}
for every $\rho<\frac{pq}{(q+1)(p-1)}\ $.

\noindent {\bf Step $2$.} Energy estimates. 

Let $\Psi_\delta =\pdp+\pdm$, as in Lemma \ref{acp}; let us mention that the use of these type of cut-off functions to deal with, separately, 
the regular and the singular  part of the data was first introduced in \cite{dmop} in the elliptic framework.

Then, we want to show that
\begin{equation}\label{q1}
\int_{\{u_n>2m\}}|u_{n}|^{q}(1-\Psi_\delta )\ dx =\omega(n,m,\delta),
\end{equation}
and
\begin{equation}\label{q2}
\int_{\{u_n<-2m\}}|u_{n}|^{q}(1-\Psi_\delta )\ dx =\omega(n,m,\delta).
\end{equation}
We will prove \rife{q1} (the proof of \rife{q2} is analogous).
Let us define
\begin{equation}\label{bm}
\dys \beta_m (s)=
\begin{cases}
1 & \text{if}\ \ s> 2m,\\
\dys\frac{s}{m}-1&\text{if}\ \ m<s\leq 2m,\\
0&\text{if}\ \ s\leq m.
\end{cases}
\end{equation}
and let us take $\beta_m (u_n)(1-\Psi_\delta)$ as test function in \rife{ane1}; we obtain
\begin{ceqnarray}
&&
\dys\int_{0}^{T}\langle (u_{n})_t,\beta_m (u_n)(1-\Psi_\delta)\rangle\ dt \clabel{at}{A}\\
&&
\qquad+ \dys \frac{1}{m}\int_{\{m<u_n\leq 2m\}}\a{u_n}\cdot\nabla u_n (1-\Psi_\delta)\clabel{a}{B}\\
&&
\quad -\dys\int_{Q}\a{u_n }\cdot\nabla \Psi_\delta \beta_m (u_n)\clabel{b}{C}\\
&&
\qquad\dys+\int_{Q}|u_n|^{q-1}u_n  \beta_m (u_n)(1-\Psi_\delta) \clabel{c}{D}\\
&&
 =\dys \int_{Q}\fnp\beta_m (u_n)(1-\Psi_\delta)\clabel{d}{E}\\
&&
\quad\dys-\int_{Q}\fnm \beta_m (u_n)(1-\Psi_\delta)\clabel{e}{F}\\
 &&
 \dys\qquad+\int_{Q}g_n  \beta_m (u_n)(1-\Psi_\delta).\clabel{f}{G}
\end{ceqnarray}
Let us analyze all terms one by one. Using \rife{conv} and  assumption \rife{ass}, by means of Egorov Theorem we readily have
\[
-\rifer{b}=\omega(n,m),
\]
and, again by Egorov Theorem we get
\[
\rifer{f}=\omega(n,m).
\]
On the other hand, thanks to  Lemma \ref{acp}, we can write
\[
\begin{array}{l}
\dys \rifer{d}\leq\int_{Q}\fnp (1-\Psi_\delta)\ dx=\int_{Q}\fnp (1-\pdp)\ dx +\int_{Q}\fnp \pdm\ dx\\
=\dys\int_{Q} (1-\pdp)\ d\lambda^+ +\int_{Q} \pdm\ d\lambda^- +\omega(n)=\omega(n,\delta).
\end{array}
\]
Moreover, we can drop both \rifer{a} and $-$\rifer{e} since they are nonnegative, while, if $B_m$ is the primitive function of $\beta_m$, we can write
\[
\begin{array}{l}
\dys \rifer{at}=\int_{Q}B_m (u_n)_t (1-\Psi_\delta)\\\\
\dys=\int_{Q}B_m (u_n)(\Psi_{\delta})_t+\into B_m (u_n)(T)\geq \omega(n,m).
\end{array}
\]
Collecting together all these results we obtain \rife{q1}.

\noindent {\bf Step $3$.} Passing to the limit.

Here, for technical reasons, we use of the double cut-off function $\Psi_{\delta,\eta}=\pdp\pep+\pdm\pem$ where $\pdp,\pdm,\pep,\pem$ are the functions  constructed in Lemma \ref{acp}; the same trick has been also used in \cite{pe2} (see also \cite{dmop}).

 Let us define
\begin{equation}\label{hm}
\dys h_m (s)=
\begin{cases}
0 & \text{if}\ \ |s|> 2m,\\
\dys 2-\frac{|s|}{m}, &\text{if}\ \ m<|s|\leq 2m,\\
1&\text{if}\ \ |s|\leq m.
\end{cases}
\end{equation}

 We take $\tkunf(1-\Psi_{\delta,\eta})\hmun$ in the weak formulation of \rife{ane1}, and  we have
\begin{ceqnarray}
&&
\dys\int_{0}^{T}\langle (u_{n})_t,\tkunf(1-\Psi_{\delta,\eta})\hmun\rangle\ dt \clabel{a1}{A}_t\\
&&
\qquad+ \dys \int_{Q}\a{u_n }\cdot\nabla \tkunf(1-\Psi_{\delta,\eta})\hmun\clabel{b1}{B}\\
&&
\quad -\dys\int_{Q}\a{u_n }\cdot\nabla \Psi_{\delta,\eta}\tkunf\hmun\clabel{c1}{C}\\
&&
\qquad\dys+\int_{Q}|u_n|^{q-1}u_n \tkunf(1-\Psi_{\delta,\eta})\hmun \clabel{d1}{D}\\
&&
 =\dys \int_{Q}\fnp\tkunf(1-\Psi_{\delta,\eta})\hmun\clabel{e1}{E}\\
&&
\quad\dys-\int_{Q}\fnm \tkunf(1-\Psi_{\delta,\eta})\hmun\clabel{f1}{F}\\
 &&
 \dys\qquad+\int_{Q}g_n  \tkunf(1-\Psi_{\delta,\eta})\hmun\clabel{g1}{G}\\
 &&
 \qquad- \dys \frac{1}{m}\int_{\{m<u_n\leq 2m\}}\a{u_n}\cdot\nabla u_n (1-\Psi_{\delta,\eta})\tkunf\clabel{h1}{H}\\
 &&
\qquad+ \dys \frac{1}{m}\int_{\{-2m\leq u_n< -m\}}\a{u_n}\cdot\nabla u_n (1-\Psi_{\delta,\eta})\,\tkunf \,. \clabel{i1}{I}
\end{ceqnarray}

Using Lemma \ref{acp} and \rife{conv} we have \rifer{c1}$=\omega(n,\eta)$, while
\[
|\rifer{e1}|+|\rifer{f1}|\leq k\int_{Q}(\fnp +\fnm)(1-\Psi_{\delta,\eta})\ dx =\omega(n,\eta),
\]
and easily
\[
\rifer{g1}=\int_{Q}g T_k (u-\varphi)\ dx +\omega(n,\eta).
\]

On the other hand, using Lemma 6 of \cite{pe2} we deduce that  $|\rifer{h1}| + |\rifer{i1}|= \omega(n,m,\eta)$.

Now let us look at \rifer{d1}:
\[
\begin{array}{l}
\rifer{d1}=\dys\int_{\{-2m\leq  u_n \leq 2m\}}|u_n|^{q-1}u_n\tkunf(1-\Psi_{\delta,\eta})\hmun\\\\
\dys+ \int_{\{u_n > 2m\}}u_n^{q}\tkunf(1-\Psi_{\delta,\eta})\hmun \\\\
\dys+\int_{\{  u_n <- 2m\}}|u_n|^{q}\tkunf(1-\Psi_{\delta,\eta})\hmun.
\end{array}
\]
Using \rife{q1} and \rife{q2} we have that the last two terms in the right hand side are $\omega(n,m,\eta)$, while
\[
\begin{array}{l}
\dys\int_{\{-2m\leq  u_n \leq 2m\}}|u_n|^{q-1}u_n\ \tkunf(1-\Psi_{\delta,\eta})\hmun\\\\
=\dys \int_{\{-2m\leq  u \leq 2m\}}|u|^{q-1}u \ T_k (u-\varphi)(1-\Psi_{\delta,\eta})\hmun+\omega(n)\\\\
\dys= \int_{Q}|u|^{q-1}u \ T_k (u-\varphi)(1-\Psi_{\delta,\eta})+\omega(n,m)\\\\
\dys=\int_{Q}|u|^{q-1}u \ T_k (u-\varphi)+\omega(n,m,\eta).
\end{array}
\]
So that
\[
\rifer{d1}=\int_{Q}|u|^{q-1}u \ T_k (u-\varphi)+\omega(n,m,\eta).
\]
Moreover,
\[
\begin{array}{l}
\rifer{b1}= \dys \int_{Q}[\a{u_n }-\a{\varphi}]\cdot\nabla \tkunf(1-\Psi_{\delta,\eta})\hmun\\\\
+\dys \int_{Q}\a{\varphi}\cdot\nabla\tkunf (1-\Psi_{\delta,\eta})\hmun,
\end{array}
\]
and
\[
\begin{array}{l}
\dys\int_{Q}\a{\varphi}\cdot\nabla\tkunf (1-\Psi_{\delta,\eta})\hmun
\\\\ \dys=\int_{Q}\a{\varphi}\cdot \nabla T_k (u-\varphi)+\omega(n,m,\eta),
\end{array}
\]
while the first term can be handled by Fatou's lemma finally obtaining
\[
\dys\int_{Q}\a{u}\cdot\nabla  T_k (u-\varphi)
 \dys\leq \liminf_{\eta\to 0^+}\liminf_{m\to\infty}\liminf_{n\to\infty}\ \rifer{b1}.
\]
We now deal with \rifer{a1}. Let us define $\Theta_{k,m}(s)$ as the primitive function of $T_k (s) h_m(s)$, observe that $\Theta_{k,m}$ is a bounded function; so that thanks to Lemma \ref{acp}, for any $\eta>0$ there exists $\delta$ small enough such that
\[
\begin{array}{l}
\dys\left| \int_{Q}\Theta_{k,m} (u_n-\varphi)\hmun(\Psi_{\delta})_t\right|=\int_{Q}\Theta_{k} (u-\varphi)|(\Psi_{\delta})_t |+\omega (n)\\\\
\dys \leq \eta +\omega(n) =\omega(n,\eta),
\end{array}
\]
and so finally
\[
\begin{array}{l}
\rifer{a1}=\dys\int_{0}^{T}\langle (u_{n}-\varphi)_t,\tkunf(1-\Psi_{\delta,\eta})\hmun\rangle\ dt
\\\\ \dys+\int_{0}^{T}\langle \varphi_t,\tkunf(1-\Psi_{\delta,\eta})\hmun\rangle\ dt\\\\
\dys=\into \Theta_{k,m} (u_n -\varphi)(T) -\into \Theta_{k,m} ( -\varphi)(0)+\int_{Q}\Theta_{k,m} (u_n-\varphi)(\Psi_{\delta})_t \\\\+\dys\int_{0}^{T}\langle \varphi_t,\tkunf(1-\Psi_{\delta,\eta})\hmun\rangle\ dt \geq \into \Theta_k (u -\varphi)(T) \\\\
\dys-\into \Theta_k (-\varphi)(0)+\dys\int_{0}^{T}\langle \varphi_t, T_k (u-\varphi)\rangle\ dt +\omega(n,m,\eta),
\end{array}
\]
where in the last passage we used the fact that $r>p$ and Fatou's lemma which can be applied for almost every $0\leq T' \leq T$.
Passing to the limit and  gathering together all these facts we can conclude that $u$ is an entropy solution of \rife{lim1}. Actually we proved this fact for almost every $0\leq T'\leq T$ but thanks to uniqueness of the entropy solution    one can easily show that $u$ is the  entropy solution  for any $T>0$.

To prove \rife{rico} take $\psi\in C^{\infty}_{0}(Q)$ in \rife{ane1} to obtain
\[
\int_{Q}|u_n|^{q-1}u_n \psi= -\int_{Q}\a{u}\cdot\nabla\psi+\int_{Q}g\psi+\int_{Q}\psi\ d\lambda +\omega(n),
\]
which together with the fact that $u$ is an entropy solution of problem \rife{lim1} (and so a distributional one) yields \rife{rico} for $\psi$ smooth. Finally,  an easy density argument allows us to conclude the proof.
\end{proof}

\end{document}